\theoremstyle{plain}
\numberwithin{equation}{section} \numberwithin{figure}{section}
\newcommand{\dps}{\displaystyle}
\newtheorem{theorem}{Theorem}[section]
\newtheorem{lemma}[theorem]{Lemma}
\title[The von K\'arm\'an theory for incompressible prestrained films]{The incompressible von K\'arm\'an theory \\ for thin prestrained plates}
\author{Hui Li}
\address{Changzhou Vocational Institute of Industry Technology, School of Information Engineering,
28 Minxin Middle Rd., Changzhou, Jiangsu, China 213164}
\email{lihui@ciit.edu.cn} 
\subjclass{74K20, 74K25}
\keywords{nonlinear elasticity, prestrain, $\Gamma$ convergence, calculus of variations, non-Euclidean, incompressibility}
\begin{document}

\begin{abstract}
    We derive a new version of the von K\'arm\'an energy and the corresponding Euler-Langrange equations, in the context of thin prestrained plates, under the condition of incompressibility relative to the given prestrain. Our derivation uses the theory of $\Gamma$-convergence in the calculus of variations, building on prior techniques in \cite{Conti-Dolzmann_2006} and \cite{Lewicka-Mahadevan-Pakzad_2011}.
\end{abstract}

\maketitle

\section{Introduction}
Wrinkles and ripples of thin films frequently occur in biological, physical, and chemical systems e.g. at edges of flowers, leaves or torn plastic sheets, in processes associated with growth, swelling or shrinkage, plasticity, following different laws depending on their origin, where the non-trivial energy-minimizing shapes emerge without external forces or boundary conditions. The non-Euclidean elasticity of plates and shells has successfully attempted describing such phenomena (see the recent monograph \cite{Lewicka_2023} and references therein). In this paper, we carry out a new analysis in this context, accounting for the incompressible thin prestrained films.

Recall that for a prestrained film, the local heterogeneous incompatibility of strains is represented by a Riemannian metric $g^h$, posed on the referential configuration $S^h$. To relieve the elastic stresses \cite{Efrati-Sharon-Kupferman_2009, Klein-Efrati-Sharon_2007} the film $S^h$ strives to realize $g^h$ and thus settles with a shape nearest to the isometric immersion of $g^h$. In some deformation processes, it is natural to assume that prestrained films are incompressible with respect to the intrinsic metric $g^h$, i.e. (see \cite{Li_2013}) the only admissible deformations $u^h$ of $S^h$ satisfy the additional constraint $\det\nabla u^h = (\det g^h)^{1/2}$ or equivalently: 
$$\det (\nabla u^h (g^h)^{-1/2}) = 1.$$ 
\noindent Given a surface $S$ in $\mathbb{R}^3$, a film with thickness $h\ll 1$ and midplate $S$ is:
\begin{equation}\label{Sh}
    S^h = \Big\{ z= z' + t \vec n(z')\mid z' \in S, -\frac{h}{2} < t < \frac{h}{2} \Big\},
\end{equation}
where $\vec n(z)$ denotes the unit normal to $S$ at $z'$.  When $S = \Omega \subset \mathbb{R}^2$ then the shell $S^h = \Omega^h$ is called a plate. For a smooth Riemann metric $g^h:S^h\to\mathbb{R}^{3\times 3}_{sym, +}$ and for a deformation $u^h \in W^{1, 2}(S^h, \mathbb R^3)$, its elastic energy is:
\begin{equation}\label{IncomEnergy}
I_{In}^h(u^h) = \frac{1}{h} \int_{S^h} W_{In}\big(\nabla u^h (\sqrt{g^h})^{-1}\big)\;\mbox{d}z,
\end{equation}
where the stored energy density $W_{In}: \mathbb R^{3 \times 3} \to[0, \infty]$ is set to be:
\begin{equation}\label{IncomW}
W_{In}(F) = \left\{\begin{array}{ll} W(F), &\mbox{if}~\det F = 1,\\ +\infty, &\mbox{otherwise}. \end{array}\right.
\end{equation}
%reflecting that the incompressible elastic bodies in the presence of prestrain only perform deformations consistent with $g^h$. 
Here, the effective energy density $W$ satisfies the standard minimal conditions for physical relevance (\ref{w-ass}). 

To study the asymptotics of the energy minimization  in (\ref{IncomEnergy}) as $h \to 0$, we first determine the scaling exponent $\beta$ such that $\inf I_{In}^h \sim h^{\beta}$ and then derive the $\Gamma$-limit $\mathcal I^{In}_{\beta}$ of $h^{-\beta}I^h_{In}$. 
When $g^h = \mathrm{Id}_3$, the non-Euclidean scenario reduces to the classical nonlinear elasticity investigated in \cite{Conti-Dolzmann_2006, Trabelsi_2005Incom, Trabelsi_2006Incom} at $\beta = 0$, in \cite{Conti-Dolzmann_2007Incom, Conti-Dolzmann_2009} at $\beta = 2$ corresponding to the incompressible version of the Kirchhoff theory; in \cite{Lewicka-Li_2015} where the authors established convergence of equilibria for incompressible elastic plates in the von K\'arm\'an regime $\beta = 4$; and in \cite{Li-Chermisi_2013} where the von K\'arm\'an theory for incompressible elastic shells has been derived. For the non-Euclidean scenario, \cite{Li_2013} investigated shells at $\beta = 2$ subjected to the prestrain given by the metric $g^h$ independent of the thickness. In the present paper, we consider plates at $\beta = 4$ and the prestrain given by growth tensors $a^h$ depending on the thickness defined as in (\ref{GrowthTensorFvK}), see also \cite{Lewicka-Mahadevan-Pakzad_2011}. The growth tensor and the Riemannian metric are connected through $g^h = (a^h)^T a^h$, and when $W$ is, in addition, isotropic, the energy (\ref{IncomEnergy}) can be rewritten as (\ref{aIncomEnergy}). In this context, we first derive the prestrained von K\'arm\'an energy and then compute its Euler-Langrange equations, extending results in \cite{Lewicka-Mahadevan-Pakzad_2011} (see also \cite{Conti-Dolzmann_2007Incom, Conti-Dolzmann_2009, Lewicka-Mora-Pakzad_2010} in the classical case).

To contextualize our findings, we briefly review the general theories when $W_{In} = W$.
%In classical scenario, $\Gamma$-convergence was first introduced in the mid-1990s by Le Dret and Raoult \cite{LeDret-Raoult_1996, LeDret-Raoult_1995}, then various $2$D theories have been rigorously derived from the theory of $3$D nonlinear elasticity for example in \cite{Friesecke-James-Muller_2002, Friesecke-James-Muller_2006, Friesecke-James-Mora-Muller_2003, LeDret-Raoult_1996, Lewicka-Li_2015, Lewicka-Mora-Pakzad_2009, Lewicka-Pakzad_2013}.
When $\beta \geq 2$, the resulting $\Gamma$-limits form an infinite hierarchy. In \cite{ Lewicka-Pakzad_2011, Bhattacharya-Lewicka-Schaffner_2016}, the case $\beta \geq 2$ and $g^h=g(x')$ has been studied; in \cite{Lewicka-Raoult-Ricciotti_2017} it has been shown that if $\beta > 2$, then $\inf I^h \leq C h^4$ which further corresponds to the specific condition on the Riemann curvatures $\{R_{12,ab}\}_{a,b = 1, 2, 3} = 0$ on $\Omega$, and, moreover, if $\beta > 4$, then $\inf I^h_g \leq C h^6$, arising when all curvatures satisfy $R(g)=0$ on $\Omega$; in \cite{Lewicka_2020} these results were extended to $g^h = g: {\Omega}^1, \mathbb R^{3\times 3}_{sym, +})$ varying in the normal direction, and proved that the scaling order of $\inf I^h_g$ relative to $h$ can only be even, i.e. $\inf I^h_g\sim h^{2n}$, obtaining all $\Gamma$-limits in such infinite hierarchy $\{\mathcal{I}_{2n}\}_{n \geq 1}$ of prestrained thin plates; in \cite{Lewicka-Lucic_2020}, the authors investigated the intrinsic metric $g^h$ with more pronounced oscillatory nature; in \cite{Lewicka-Mahadevan-Pakzad_2014, Lewicka-Ochoa-Pakzad_2015, Lewicka-Mahadevan-Pakzad_2017}, the case of plates with even more general structure of $g^h$ has been analyzed; and in \cite{Li_2013}, the author obtained the Kirchhoff theory for shells with the metric $g^h$ invariant in the normal direction. 

When $\beta < 2$, no systematic theory are so far available, but some valuable specific studies have been obtained: in \cite{BenBelgacem-Conti-DeSimone-Muller_2000, BenBelgacem-Conti-DeSimon-Muller_2002, Jin-Sternberg_2001}, the authors studied compression-driven blistering; \cite{Gemmer-Sharon-Shearman-Venkataramani_2016, Gemmer-Venkataramani_2013, Gemmer-Venkataramani_2011} considered buckling; \cite{Venkataraman_2004, Conti-Maggi_2008} investigated origami patterns; \cite{Muller-Olbermann_2014, Olbermann_2019, Olbermann_2016} analyzed conical singularities; and \cite{Bella-Kohn_2014, Bella-Kohn_2017} probed coarsening patterns.

In addition to the above studies under the uniform thickness assumption, plates and shells with varying thickness have been studied, both in classical case \cite{Lewicka-Mora-Pakzad_2009, Li_2017}, and in the non-Euclidean (prestrained) case \cite{Li_2024}.

\section{Statements of the main results}\label{Brief_FvK}
We consider a family of $3$d thin plates: 
\begin{equation}\label{VShellFvK}
\Omega^h = \Big\{x=(x', x_3)\mid x'\in \Omega, \; x_3 \in \big(-\frac{h}{2}, \frac{h}{2}\big)\Big\},
\end{equation}
where $\Omega \subset \mathbb R^2$ is an open, bounded, simply connected domain and $h\ll 1$.
Each $\Omega^h$ undergoes an instantaneous growth, described by $a^h: \Omega^h \to \mathbb R^{3 \times 3}$:
\begin{equation}\label{GrowthTensorFvK}
a^h(x', x_3) = \mbox{Id}_3 + h^2 \epsilon_g(x') + hx_3 \kappa_g(x'),
\end{equation}
where $\epsilon_g, \kappa_g$ are two given smooth matrix fields on $\bar\Omega$. 
For each deformation $u^h \in W^{1, 2}(\Omega^h, \mathbb R^3)$, its  associated elastic energy is given by:
\begin{equation}\label{aIncomEnergy}
I_{In}^h(u^h) = \frac{1}{h} \int_{\Omega^h} W_{In}\big(\nabla u^h (a^h)^{-1}\big)\,\mbox{d}x.
\end{equation}
where $W_{In}$ is as in (\ref{IncomW}) and $W: \mathbb R^{3 \times 3} \to \mathbb R_+$ satisfies the frame indifference, normalization, non-degeneracy, and local regularity conditions:
\begin{equation}\label{w-ass}
\left\{
\begin{minipage}{11cm}
\begin{itemize}
\item [(i)] $W(RF) = W(F)$ for all $R \in SO(3)$ and $F \in \mathbb R^{3 \times 3}$,
\item [(ii)] $W(\mbox{Id}_3) = 0$,
\item [(iii)] $W(F) \geq c \, \mbox{dist}^2 (F, SO(3))$ with some $c>0$,
\item [(iv)] $W$ is $\mathcal C^2$ in a $\delta$-neighborhood of $SO(3)$.
\end{itemize}
\end{minipage}
\right.
\end{equation}

\begin{comment}
In the von K\'arm\'an regime, i.e. when $I^h_{In}(u^h) \leq Ch^4$, we show that the limiting energy functional takes the form:
\[
\begin{split}
\mathcal I_g^{In}(w, v) = & ~\frac{1}{2} \int_\Omega \mathcal Q_2^{In}\Big(\mbox{sym} \nabla w + \frac{1}{2}\nabla v \otimes \nabla v - (\mbox{sym}\, \epsilon_g)_{2 \times 2}\Big)~\mbox{d}x \\ & + \frac{1}{24} \int_{\Omega}\mathcal Q_2^{In}\big(\nabla^2 v + (\mbox{sym} \,\kappa_g)_{2 \times 2}\big)~\mbox{d}x,
\end{split}
\]
where the constraint of incompressibility is embodied through the trace-free condition in the definition of $\mathcal Q_2^{In}$.
\end{comment}

Our first main result is the compactness, lower bound and recovery sequence statements in the theorem below. Therein, $C$ denotes any constant that is independent of $h$ (but it may depend on the given $\Omega$, $W$, $\epsilon_g$, $\kappa_g$).

\begin{theorem}\label{IncompressibleTFvK}
\begin{itemize}
\item [(a)]  Let $u^h \in W^{1, 2}(\Omega^h, \mathbb R^3)$ satisfy:
\begin{equation}\label{IncomEnergysclFvK}
I^h_{In}(u^h) \leq C h^4. 
\end{equation} 
Then there exist rotations $\bar R^h \in SO(3)$ and translations $c^h \in \mathbb R^3$ such that the normalizations:
\begin{equation}\label{IncomNUyFvK}
y^h(x', x_3) = (\bar R^h)^T u^h(x', hx_3) - c^h: \Omega^1 \to \mathbb R^3,
\end{equation}
enjoy the following properties:
\begin{itemize}
\item [(i)] $y^h(x', x_3) \to x'$ in $W^{1, 2}(\Omega^1, \mathbb R^3)$.
\item [(ii)] The scaled displacements:
\begin{equation}\label{IncomSDFvK}
V^h(x') = \frac{1}{h} \fint_{-1/2}^{1/2} y^h(x', t) - x'~ \mathrm{d}t
\end{equation}
converge (up to a subsequence) in $W^{1, 2}(\Omega, \mathbb R^3)$ to the vector field $(0, 0, v)^T$, with the only non-zero out-of-plane scalar component having higher regularity $v \in W^{2, 2}(\Omega, \mathbb R)$.
\item [(iii)] The scaled in-plane displacements $h^{-1}V_{tan}^h$ converge (up to a subsequence) weakly in $W^{1, 2}(\Omega, \mathbb R^2)$ to an in-plane displacement field $w \in W^{1, 2}(\Omega, \mathbb R^2)$.
\item [(iv)] The scaled energies $\frac{1}{h^4} I_{In}^h(u^h)$ have a lower bound:
\[
\liminf_{h \to 0}\frac{1}{h^4}I^h_{In}(u^h) \geq \mathcal I_g^{In}(w, v),
\]
where:
\begin{equation}\label{IncomLmEFvK}
\begin{split}
\mathcal I^{In}_g(w, v) = & \frac{1}{2} \int_{\Omega}\mathcal Q_2^{In}\Big(\mathrm{sym}\,\nabla w + \frac{1}{2}\nabla v \otimes \nabla v - (\mathrm{sym}\,\epsilon_g)_{2 \times 2}\Big) \\
& + \frac{1}{24}\int_{\Omega}\mathcal Q_2^{In}\left(\nabla^2 v + (\mathrm{sym}\,\kappa_g)_{2 \times 2}\right),
\end{split}
\end{equation}
and where the quadratic form $\mathcal Q_2^{In}$ acting on $F \in \mathbb R^{2 \times 2}$ is:
\begin{equation}\label{IncomQ2FvK}
\begin{split}
&\mathcal Q_2^{In}(F) = \min \{\mathcal Q_3(\tilde F)\mid \tilde F \in \mathbb R^{3 \times 3},~ \tilde F_{2 \times 2} = F ~\textnormal{and}~\textnormal{Tr}\,\tilde F = 0\},\\
& \mbox{with} ~~\mathcal Q_3(\tilde F) = \nabla^2W(\mathrm{Id}_3)(\tilde F, \tilde F).
\end{split}
\end{equation}
\end{itemize} 
\item [(b)] Conversely, for each $w \in W^{1, 2}(\Omega, \mathbb R^3)$ and $v \in W^{2, 2}(\Omega, \mathbb R)$, there exist deformations $u^h \in W^{1, 2}(\Omega^h, \mathbb R^3)$ such that (i), (ii), (iii) in part (a) hold with $y^h(x', x_3) = u^h(x', hx_3)$ and the lower bound is realized:
\[
\lim_{h \to 0}\frac{1}{h^4}I^h_{In}(u^h)= \mathcal I_g^{In}(w, v).
\]
\end{itemize}
\end{theorem}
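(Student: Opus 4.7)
The plan is to combine the $\Gamma$-convergence strategy of \cite{Lewicka-Mahadevan-Pakzad_2011} for prestrained von K\'arm\'an plates with the incompressibility corrections developed in \cite{Conti-Dolzmann_2007Incom, Conti-Dolzmann_2009}. For the compactness assertions (i)-(iii) in part (a), the energy bound (\ref{IncomEnergysclFvK}) together with (\ref{w-ass})(iii) yields $\int_{\Omega^h}\mathrm{dist}^2(\nabla u^h (a^h)^{-1}, SO(3))\,\mathrm{d}x \leq C h^5$, and since $a^h = \mathrm{Id}_3 + O(h)$, the same bound transfers to $\nabla u^h$. Applying the Friesecke--James--M\"uller geometric rigidity theorem on the rescaled domain $\Omega^1$ provides rotations $\bar R^h \in SO(3)$ so that the normalized deformations $y^h$ in (\ref{IncomNUyFvK}) converge to $x'$ in $W^{1,2}$; the classical refinement of this analysis then gives weak $W^{1,2}$-convergence of $V^h$ in (\ref{IncomSDFvK}) to $(0,0,v)^T$, the extra regularity $v \in W^{2,2}(\Omega, \mathbb R)$, and weak $W^{1,2}$-convergence of $h^{-1}V^h_{tan}$ to some $w \in W^{1,2}(\Omega, \mathbb R^2)$.

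For the lower bound (iv), write $(\bar R^h)^T \nabla u^h (a^h)^{-1} = \mathrm{Id}_3 + h G^h + o(h)$ in $L^2$, where $\mathrm{sym}\,G^h$ is bounded in $L^2$. The constraint $\det(\nabla u^h (a^h)^{-1}) = 1$, linearized in $h$, forces $\mathrm{Tr}(\mathrm{sym}\, G^h) \to 0$, which is exactly the trace-free condition appearing in (\ref{IncomQ2FvK}). A Taylor expansion of $W$ around $\mathrm{Id}_3$ using (\ref{w-ass})(iv) gives $W(\mathrm{Id}_3 + hG) \approx \frac{h^2}{2}\mathcal Q_3(\mathrm{sym}\, G)$; identifying the weak limit of the $2{\times}2$ block of $\mathrm{sym}\, G^h$ as the sum of the stretching term $\mathrm{sym}\,\nabla w + \frac{1}{2}\nabla v \otimes \nabla v - (\mathrm{sym}\,\epsilon_g)_{2\times 2}$ plus the bending term $x_3(\nabla^2 v + (\mathrm{sym}\,\kappa_g)_{2\times 2})$, and then optimizing pointwise over the free normal entries subject to the trace-free constraint, produces precisely the quadratic form $\mathcal Q_2^{In}$ and the desired inequality via Fatou and weak lower semicontinuity.

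For part (b), I construct a recovery sequence by the standard prestrained von K\'arm\'an ansatz
\[
u^h(x', x_3) = (x' + h^2 w(x'),\, h v(x')) + h x_3 \vec b(x') + h^2 \vec c^{\,h}(x', x_3),
\]
where $\vec b$ is the Cosserat-type out-of-plane correction and $\vec c^{\,h}$ contains a warping field chosen pointwise so as to realize the minimizer in the definition of $\mathcal Q_2^{In}$ and to absorb the prestrain contributions $\epsilon_g$ and $\kappa_g$. A direct computation, analogous to that of \cite{Lewicka-Mahadevan-Pakzad_2011} but with the trace-constrained optimization built in, shows that this ansatz produces $\frac{1}{h^4} I^h_{In}(u^h) \to \mathcal I_g^{In}(w, v)$ \emph{provided} incompressibility holds exactly.

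The main obstacle is that the ansatz above satisfies only $\det(\nabla u^h (a^h)^{-1}) = 1 + o(h^2)$, whereas $W_{In}$ requires equality. To remedy this, I invoke the Conti--Dolzmann correction lemma from \cite{Conti-Dolzmann_2007Incom, Conti-Dolzmann_2009}: given a smooth map whose Jacobian is close to a prescribed positive function, one constructs a nearby map whose Jacobian equals that function pointwise, with the $W^{1,\infty}$-distance controlled quantitatively by the Jacobian error. Applied here with target $\det a^h$, the corrected deformation differs from $u^h$ by $o(h)$ in $W^{1,\infty}$, so by (\ref{w-ass})(iv) the scaled energy is perturbed only by terms that vanish as $h \to 0$. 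A density argument approximating $(w,v) \in W^{1,2} \times W^{2,2}$ by smooth fields then closes part (b). Ensuring that the exponents in the Conti--Dolzmann correction align with the $h^4$-scaling and that the correction does not destroy the optimality of the warping selected for $\mathcal Q_2^{In}$ is the step where the incompressible theory most substantively departs from the classical one.
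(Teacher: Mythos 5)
Your overall strategy (use the compressible von K\'arm\'an analysis of \cite{Lewicka-Mahadevan-Pakzad_2011} and fix the incompressibility constraint via ideas of Conti--Dolzmann) is the right one, but both halves differ from the paper and the second half has a genuine gap. For part (a), the paper never passes the constraint $\det(\nabla u^h(a^h)^{-1})=1$ to the limit directly: it penalizes, setting $W^k(F)=W(F)+\frac k2(\det F-1)^2$, notes $I^h_k(u^h)\le I^h_{In}(u^h)\le Ch^4$, applies Theorem \ref{TFvK-1} for each fixed $k$ (which already yields (i)--(iii) and a lower bound with $\mathcal Q_2^k$), and then sends $k\to\infty$ using the inequality (\ref{IneqQ2k}). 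Your direct linearization could in principle replace this, but as written it is mis-scaled: with the global rotation $\bar R^h$ the symmetric part of the strain is $O(h^2)$, not $O(h)$, and in the expansion of $\det(\mathrm{Id}+hG^h)=1$ the second invariant of the $O(1)$ antisymmetric part (built from $\nabla v$) enters at the same order as $h\,\mathrm{Tr}(\mathrm{sym}\,G^h)$. Hence the conclusion is not ``$\mathrm{Tr}(\mathrm{sym}\,G^h)\to0$'': the trace-free condition constrains the full second-order limiting strain, which carries the quadratic contributions $\frac12\nabla v\otimes\nabla v$ and $\frac12|\nabla v|^2e_3\otimes e_3$ (this is exactly why these terms appear in (\ref{trace0FvK})). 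A correct direct argument needs the pointwise rotation fields from rigidity, the strong convergence of their linearizations, and the affine-in-$x_3$ structure of the limiting in-plane strain; none of this is supplied.

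The more serious problem is part (b). The ``Conti--Dolzmann correction lemma'' you invoke---a black-box statement producing a nearby map with exactly prescribed Jacobian and $W^{1,\infty}$-distance controlled by the Jacobian error---is not what \cite{Conti-Dolzmann_2007Incom,Conti-Dolzmann_2009} provide, and in any case your quantitative bookkeeping does not close. At the $h^4$ energy scale the strain of the ansatz is $O(h^2)$, so any correction must perturb the gradient by $o(h^2)$: since $W\ge c\,\mathrm{dist}^2(\cdot,SO(3))$, a perturbation that is merely $o(h)$ in $W^{1,\infty}$ can contribute $o(h^2)$ to the energy density and hence an unbounded amount to $h^{-4}I^h_{In}$. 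Moreover, any Dacorogna--Moser type result would have to be applied on the degenerating domains $\Omega^h$ (equivalently, with the anisotropic gradient after rescaling), where the constants are not uniform in $h$; making the correction uniform is precisely the hard part. The paper (following \cite{Conti-Dolzmann_2009}) does this explicitly: it composes the compressible ansatz $u^h_c$ with a reparametrization $\phi^h$ of the thickness variable solving the ODE (\ref{ODEFvK}), whose solvability and regularity are established by a Banach fixed point plus an approximate-iteration argument, and whose bounds (\ref{boundsFvK}) give $|\phi^h-x_3|,\ |\partial_3\phi^h-1|,\ |\nabla_{x'}\phi^h|\le Ch^3$---i.e. an $O(h^3)=o(h^2)$ correction, small enough not to disturb the limit (\ref{limitIhFvK}); the trace-zero choice of the warping fields $c_0,c_1$ in Lemma \ref{smoothvwFvK} is what makes the uncorrected determinant error $O(h^3)$ in the first place. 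Without an argument of this kind (or a correctly quantified substitute), your recovery-sequence step does not go through.
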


\medskip

Our second main result consists of deriving the associated Euler-Lagrange equations of $\mathcal{I}_g^{In}$ for isotropic materials. These are:
\begin{equation}\label{EL}
\begin{split}
& \Delta^2 \tilde \Phi = - \frac{3 \mu}{2}[v, v] - 3\mu \,\mbox{curl}^T\mbox{curl}(\epsilon_g)_{2 \times 2}\\
& \frac{\mu}{3} \,\Delta^2 v = [v, \Phi] - \frac{\mu}{3} \mbox{div}^T\mbox{div}\Big((\mbox{sym} \, \kappa_g)_{2 \times 2} + \frac{1}{2} \mbox{cof}\,(\mbox{sym}\, \kappa_g)_{2 \times 2}\Big),
\end{split}
\end{equation}
where $\mu$ is one of the Lam\'e constants and $\tilde \Phi \in W^{2, 2}(\Omega, \mathbb R)$ is the Airy stress potential. The system (\ref{EL}) coincides with the limit of the incompatible F\"oppl-von K\'arm\'an equations derived in \cite{Lewicka-Mahadevan-Pakzad_2011} as the Poisson ratio $\nu \to 1/2$.

We organize our paper as follows: Sections 3 and 4 focus on the proof of Theorem \ref{IncompressibleTFvK}, while Section 5 derives the equations (\ref{EL}).

\section{Compactness and the lower bound}
Below, we combine the technique developed for the prestrained plates subject to growth tensor $a^h$ in \cite{Lewicka-Mahadevan-Pakzad_2011} and the method developed for the incompressible Kirchhoff model in \cite{Conti-Dolzmann_2009}. We first recall:

\begin{theorem} \label{TFvK-1}
(Theorem 1.2 and 1.3 of \cite{Lewicka-Mahadevan-Pakzad_2011})
Assume that the deformations $u^h \in W^{1, 2}(\Omega^h, \mathbb R^3)$ satisfy:
\[
   I^h(u^h) = \frac{1}{h} \int_{\Omega^h} W\big(\nabla u^h (a^h)^{-1}\big) \;\mathrm{d}x \leq Ch^4,
\]
relative to the growth tensors $a^h$ as in (\ref{GrowthTensorFvK}) and 
the energy density $W$ as in (\ref{w-ass}). Then there exist $\bar R^h \in SO(3)$ and $c^h \in \mathbb{R}^3$ such that for the normalized deformations $y^h(x, x_3)$ defined by (\ref{IncomNUyFvK}), assertions (i), (ii), (iii) in part (a) of Theorem \ref{IncompressibleTFvK} hold and the scaled energies have the lower bound:
\[
   \liminf_{h \to 0}\frac{1}{h^4}I^h(u^h) \geq \mathcal I_g(w, v),
\]
where:
\begin{equation}\label{LmEFvK}
\begin{split}
\mathcal I_g(w, v) = &\frac{1}{2} \int_{\Omega}\mathcal Q_2\left(\mathrm{sym}\, \nabla w + \frac{1}{2}\nabla v \otimes \nabla v - (\mathrm{sym}\, \epsilon_g)_{2 \times 2}\right) \\
& +  \frac{1}{24}\int_{\Omega}\mathcal Q_2\left(\nabla^2 v + (\mathrm{sym} \,\kappa_g)_{2 \times 2}\right),
\end{split}
\end{equation}
and $\mathcal Q_2$ acting on matrices $F \in \mathbb R^{2 \times 2}$ is defined in:
\begin{equation}\label{Q2FvK}
\begin{split}
& \mathcal Q_2(F) = \min \{\mathcal Q_3(\tilde F)\mid \tilde F \in \mathbb R^{3 \times 3},~ \tilde F_{2 \times 2} = F \}, \\ & \mathcal Q_3(\tilde F) = \nabla^2W(\mathrm{Id}_3)(\tilde F, \tilde F).
\end{split}
\end{equation}
\end{theorem}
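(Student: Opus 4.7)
The plan is to adapt the Friesecke--James--M\"uller (FJM) geometric rigidity scheme, in the form refined for the von K\'arm\'an regime of classical plates, to the present prestrained setting with growth tensor $a^h = \mathrm{Id}_3 + h^2\epsilon_g + hx_3\kappa_g$. Since $a^h = \mathrm{Id}_3 + O(h)$ in $L^\infty$, condition (iii) of (\ref{w-ass}) and the hypothesis $I^h(u^h)\le Ch^4$ give, after absorbing $(a^h)^{-1}$,
\[
\int_{\Omega^h}\mathrm{dist}^2(\nabla u^h, SO(3))\,\mathrm dx \le Ch^{5}.
\]
Cover $\Omega$ by squares of side $h$ and apply the FJM rigidity estimate on each cylinder of base such a square and height $h$ to produce constant rotations, which I would then mollify and patch into an approximating map $\tilde R^h \in W^{1,2}(\Omega, \mathbb R^{3\times 3})$ taking values near $SO(3)$ and satisfying
\[
\int_\Omega|\nabla \tilde R^h|^2 \le Ch^2, \qquad \int_{\Omega^h}|\nabla u^h - \tilde R^h\, a^h|^2\,\mathrm dx \le Ch^5.
\]

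Let $\bar R^h$ be the projection onto $SO(3)$ of $\fint_\Omega \tilde R^h$, and choose $c^h$ so that $\fint_{\Omega^1} y^h = 0$. Then $\nabla y^h \to [\mathrm{Id}_{3\times 2}\,|\,e_3]$ in $L^2(\Omega^1)$, proving (i). Setting $A^h = h^{-1}\bigl((\bar R^h)^T \tilde R^h - \mathrm{Id}_3\bigr)$, a bounded sequence in $W^{1,2}(\Omega, \mathbb R^{3\times 3})$ with skew-symmetric weak limit $A$, I would compute the scaled displacement $V^h$ and express its $W^{1,2}$ gradient in terms of $A^h$ to obtain $V^h\to (0,0,v)$ in $W^{1,2}(\Omega, \mathbb R^3)$ with $v \in W^{2,2}$ (the second derivatives of $v$ being controlled by $\nabla A^h$). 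The in-plane part $h^{-1}V^h_{tan}$ picks up one further vanishing order in $h$ and converges weakly in $W^{1,2}$ to the field $w$ in (a)(iii); both limits are read off from the skew structure of $A$ and from the strain scaling.

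For the lower bound, I would introduce the rescaled strain
\[
G^h(x',x_3) = \frac{1}{h^2}\Bigl((\tilde R^h)^T\nabla u^h(x',hx_3)\bigl(a^h(x',hx_3)\bigr)^{-1} - \mathrm{Id}_3\Bigr),
\]
which is uniformly bounded in $L^2(\Omega^1)$ by the energy bound. Passing to a weak $L^2$ limit $G$, and writing out $G^h$ by substituting the expansion of $a^h$ and of $\nabla y^h$ via $A^h$, the $2\times 2$ submatrix is identified as
\[
G_{2\times 2}(x',x_3) = \mathrm{sym}\,\nabla w + \tfrac12\nabla v\otimes\nabla v - (\mathrm{sym}\,\epsilon_g)_{2\times 2} - x_3\bigl(\nabla^2 v + (\mathrm{sym}\,\kappa_g)_{2\times 2}\bigr).
\]
Since $W$ is $\mathcal C^2$ near $SO(3)$, a truncation of $G^h$ on $\{|G^h|\le h^{-1}\}$ combined with the Taylor expansion $W(\mathrm{Id}_3 + h^2 G^h) = \tfrac{h^4}{2}\mathcal Q_3(G^h) + o(h^4)$ and weak $L^2$ lower semicontinuity of $G\mapsto \int \mathcal Q_3(G)$ yields
\[
\liminf_{h\to 0}\frac{1}{h^4}I^h(u^h) \ge \frac{1}{2}\int_{\Omega^1}\mathcal Q_3(G)\,\mathrm dx.
\]
Minimizing $\mathcal Q_3$ pointwise over the free third-row and third-column entries of $G$ (with $G_{2\times 2}$ fixed as above) converts $\mathcal Q_3$ into $\mathcal Q_2$, and collecting the $x_3$-independent and $x_3$-linear pieces produces exactly $\mathcal I_g(w,v)$ in (\ref{LmEFvK}).

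The main obstacle is the careful bookkeeping in identifying $G_{2\times 2}$: the stretching term $\tfrac12\nabla v\otimes\nabla v + \mathrm{sym}\,\nabla w$ and the membrane prestrain $-(\mathrm{sym}\,\epsilon_g)_{2\times 2}$ appear at order $h^2$, whereas the bending term $-\nabla^2 v$ and the bending prestrain $-(\mathrm{sym}\,\kappa_g)_{2\times 2}$ appear at order $hx_3$, and these orders must match exactly against the factor $h^2$ in the definition of $G^h$ for Fatou to capture the full $\mathcal I_g$. A related delicate step is controlling the non-quadratic remainder of $W$ off the set where $|G^h|$ is small, for which the standard remedy is truncation together with the pointwise bound $W(F)\ge c\,\mathrm{dist}^2(F, SO(3))$.
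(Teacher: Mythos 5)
There is no in-paper proof to compare against: Theorem \ref{TFvK-1} is quoted verbatim from Theorems 1.2--1.3 of \cite{Lewicka-Mahadevan-Pakzad_2011} and is used as a black box in the present paper, so simply citing it (as the paper does) already discharges the statement. Your sketch instead reconstructs the proof of the cited reference itself, i.e.\ the Friesecke--James--M\"uller rigidity scheme adapted to the growth tensor $a^h=\mathrm{Id}_3+h^2\epsilon_g+hx_3\kappa_g$, and it is sound in outline: the bound $\int_{\Omega^h}\mathrm{dist}^2(\nabla u^h,SO(3))\,\mathrm{d}x\le Ch^5$ is correct (indeed $a^h-\mathrm{Id}_3=\mathcal O(h^2)$ on $\Omega^h$ since $|x_3|<h/2$), and the approximating rotation field with $\int_\Omega|\nabla\tilde R^h|^2\le Ch^2$, the skew-symmetric limit $A$ encoding $\nabla v$, the scaled strain $G^h$ bounded in $L^2$, and the truncation--Taylor--weak lower semicontinuity step are exactly the ingredients of \cite{Lewicka-Mahadevan-Pakzad_2011}. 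Two points of precision. First, only the symmetric part of the $2\times 2$ block of the weak limit $G$ can be identified; the skew part is not controlled, so the correct statement is $\mathrm{sym}\,G_{2\times2}(x',x_3)=\mathrm{sym}\,\nabla w+\tfrac12\nabla v\otimes\nabla v-(\mathrm{sym}\,\epsilon_g)_{2\times2}-x_3\big(\nabla^2 v+(\mathrm{sym}\,\kappa_g)_{2\times2}\big)$. This is harmless, since $\mathcal Q_3$ (hence $\mathcal Q_2$) depends only on symmetric parts, but the pointwise minimization should then be phrased over all $\tilde F$ with prescribed $\tilde F_{2\times2}$ up to its symmetric part. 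Second, the two structural facts you assert --- that the limit strain is affine in $x_3$ with bending coefficient $-(\nabla^2 v+(\mathrm{sym}\,\kappa_g)_{2\times2})$, and that the $x_3$-independent membrane part equals $\mathrm{sym}\,\nabla w+\tfrac12\nabla v\otimes\nabla v-(\mathrm{sym}\,\epsilon_g)_{2\times2}$, the quadratic term arising from $-\tfrac12A^2$ --- are the heart of the matter and require the difference-quotient and first-moment identification arguments of the reference; your proposal states them with the right scalings but does not prove them, which is acceptable here only because the result is in any case available to cite.
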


\medskip

We now recall the technique from \cite{Conti-Dolzmann_2009}, used in \cite{Li-Chermisi_2013}. For all $k > 0$, let:
\begin{equation}\label{WkFvK}
\begin{split}
& W^k(F) = W(F) + \frac{k}{2}(\det F - 1)^2,
\\ & \mbox{and  } ~ I_k^h(u^h) = \frac{1}{h}\int_{\Omega^h}W^k\big(\nabla u^h(a^h)^{-1}\big)\;\mbox{d}x.
\end{split}
\end{equation}
For each $F \in \mathbb{R}^{3 \times 3}$, consider the quadratic forms associated with $W_k$:
\[
\mathcal Q_3^k(\tilde F) = \nabla^2 W^k(\mbox{Id}_3)(\tilde F, \tilde F) = \mathcal Q_3(\tilde F) + k(\mbox{Tr}\; \tilde F)^2,
\]
and for each $F \in \mathbb{R}^{2 \times 2}$ the corresponding reduced forms:
\[
\mathcal Q_2^k(F) =\min\{\mathcal Q_3^k(\tilde F)\mid \tilde F \in \mathbb R^{3 \times 3}, ~\tilde F_{2 \times 2} = F\}.
\]
According to Lemma 2.1 in \cite{Conti-Dolzmann_2009}, there holds for each $F \in \mathbb{R}^{2 \times 2}$, $k > 0$:
\begin{equation}\label{IneqQ2k}
\mathcal Q_2^{In}(F) - \frac{C}{\sqrt{k}}\|F\|^2 \leq \mathcal Q_2^k(F) \leq \mathcal Q_2^{In}(F), 
\end{equation}
where the constant $C$ is independent of $k$.

\bigskip

\noindent {\bf Proof of Theorem \ref{IncompressibleTFvK}, part (a).} 

\noindent For each $k > 0$, 
the approximate density $W^k$ satisfies (\ref{w-ass}) and also $I_k^h(u^h) \leq I^h_{In}(u^h) \leq Ch^4$. Hence, for each fixed $k$, Theorem \ref{TFvK-1} directly implies (i), (ii), (iii) in part (a). Since only the bound on the energy is utilized in the proof, we see that for different $k$ the same sequences of rotations $\bar R^h$ and translations $c^h$ can be used, and the same subsequences $y^h$. Moreover:
\begin{equation*}
\begin{split}
\liminf_{h \to 0}\frac{1}{h^4}I^h_k (u^h) \geq \mathcal I_g^k(w, v) = & \; \frac{1}{2} \int_{\Omega}\mathcal Q_2^k\Big(\mathrm{sym}\, \nabla w + \frac{1}{2}\nabla v \otimes \nabla v - (\mathrm{sym}\, \epsilon_g)_{2 \times 2}\Big) \\
& + \frac{1}{24}\int_{\Omega}\mathcal Q_2^k\left(\nabla^2 v + (\mathrm{sym} \,\kappa_g)_{2 \times 2}\right).
\end{split}
\end{equation*}
From (\ref{IneqQ2k}), we obtain:
\begin{equation*}
\begin{split}
& \mathcal Q_2^k\Big(\mathrm{sym}\, \nabla w  + \frac{1}{2}\nabla v \otimes \nabla v - (\mathrm{sym}\, \epsilon_g)_{2 \times 2}\Big) \\ 
& \qquad\qquad \geq \mathcal Q_2^{In}\Big(\mathrm{sym}\, \nabla w + \frac{1}{2}\nabla v \otimes \nabla v - (\mathrm{sym}\, \epsilon_g)_{2 \times 2}\Big) 
\\ & \qquad\qquad \quad - \frac{C}{\sqrt{k}}\big\|\mathrm{sym}\, \nabla w + \frac{1}{2}\nabla v \otimes \nabla v - (\mathrm{sym}\, \epsilon_g)_{2 \times 2}\big\|^2,\\
& \mathcal Q_2^k\left(\nabla^2 v + (\mathrm{sym} \,\kappa_g)_{2 \times 2}\right)\\ &\qquad\qquad  \geq \mathcal Q_2^{In}\left(\nabla^2 v + (\mathrm{sym} \,\kappa_g)_{2 \times 2}\right) 
     - \frac{C}{\sqrt{k}}\left\|\nabla^2 v + (\mathrm{sym} \,\kappa_g)_{2 \times 2}\right\|^2.
\end{split}
\end{equation*}
Thus, for each $k>0$there holds:
\begin{equation*}
\begin{split}
& \liminf_{h \to 0}\frac{1}{h^4}I^h_{In} (u^h) \geq \liminf_{h \to 0}\frac{1}{h^4}I^h_k (u^h) \\
& \geq  \frac{1}{2} \int_{\Omega}\mathcal Q_2^{In}\big(\mathrm{sym}\, \nabla w + \frac{1}{2}\nabla v \otimes \nabla v - (\mathrm{sym}\, \epsilon_g)_{2 \times 2}\big) \\
&\qquad \quad - \frac{C}{\sqrt{k}}\big\|\mathrm{sym}\, \nabla w + \frac{1}{2}\nabla v \otimes \nabla v - (\mathrm{sym}\, \epsilon_g)_{2 \times 2}\big\|^2 \mathrm{d} x' \\
& \quad + \frac{1}{24}\int_{\Omega}\mathcal Q_2^{In}\left(\nabla^2 v + (\mathrm{sym} \,\kappa_g)_{2 \times 2}\right) 
     - \frac{C}{\sqrt{k}}\left\|\nabla^2 v + (\mathrm{sym} \,\kappa_g)_{2 \times 2}\right\|^2 \mbox{d}x'.
\end{split}
\end{equation*}
The result follows by passing $k \to \infty$.
\endproof

\section{Construction of the recovery sequence}

We first introduce the following notation. For any $F \in \mathbb R^{2 \times 2}$, by $(F)^{\ast} \in \mathbb R^{3 \times 3}$ we denote the matrix for which $(F)^{\ast}_{2 \times 2} = F$ and $(F)_{i3}^{\ast} = (F)_{3i}^{\ast} = 0$ for $i = 1, 2, 3$. 
%Let $c(F)\in \mathbb R^3$ be the unique vector so that $\mathcal Q_2(F) = \mathcal Q_3\left((F)^{\ast} + \mathrm{sym} \,(c \otimes e_3)\right)$. The well-definedness and linearity of such mapping $c: \mathbb R_{\mbox{sym}}^{2 \times 2} \longrightarrow \mathbb R^3$ is due to the positive definiteness of the quadratic form $\mathcal Q_3$ on the space of symmetric matrices. 
The vector $l(F)\in\mathbb{R}^3$ is the unique vector such that:
\[
\mbox{sym}\left(F - (F_{2 \times 2})^{\ast}\right) = \mbox{sym}\big(l(F)\otimes e_3\big).
\]
Since $\mathcal C^{\infty}(\bar \Omega)$ is dense in $W^{2, 2}(\Omega)$ and $\mathcal C^{\infty}(\bar \Omega, \mathbb R^2)$ is dense in $W^{1, 2}(\Omega; \mathbb R^2)$, existence of the desired recovery sequence follows through a diagonal argument from the existence of recovery sequence for smooth $v$ and $w$. The crucial argument is provided by:
\begin{lemma}\label{trace0lemmaFvK}
Let $(v,w) \in \mathcal C^{\infty}(\bar \Omega, \mathbb R\times \mathbb{R}^2)$ and $c_0, c_1 \in \mathcal{C}^{\infty}(\Omega, \mathbb R^3)$ satisfy:
\begin{equation}\label{trace0FvK}
\begin{split}
&\textnormal{Tr}\Big(\big(\mathrm{sym}\, \nabla w + \frac{1}{2} \nabla v \otimes \nabla v - (\mathrm{sym}\, \epsilon_g)_{2 \times 2}\big)^{\ast} \\ & \qquad \qquad\qquad \qquad + \big(\frac{1}{2} |\nabla v|^2 e_3 + c_0\big) \otimes e_3 \Big) = 0,\\
&\textnormal{Tr}\Big(\left(-\nabla^2 v - (\textnormal{sym} \; \kappa_g)_{2 \times 2}\right)^{\ast} + c_1 \otimes e_3\Big) = 0.
\end{split}
\end{equation}
Then, there exist incompressible deformations $u^h \in \mathcal C^1(\Omega^h, \mathbb R^3)$ such that (i), (ii), (iii) in Theorem \ref{IncompressibleTFvK} part (a) hold, with $y^h(x', x_3) = u^h(x', hx_3)$ and:
\begin{equation}\label{limitIhFvK}
\begin{split}
&\lim_{h \to 0}\frac{1}{h^4}I^h_{In}(u^h) =  \frac{1}{2}\int_{\Omega} \mathcal Q_3\Big(\big(\mathrm{sym}\, \nabla w + \frac{1}{2} \nabla v \otimes \nabla v - (\mathrm{sym}\, \epsilon_g)_{2 \times 2}\big)^{\ast} \\ & \qquad\qquad\qquad \qquad\quad + \big(\frac{1}{2} |\nabla v|^2 e_3 + c_0\big) \otimes e_3\Big)\;\mathrm{d}x'\\
& \hspace{2cm} + \frac{1}{24}\int_{\Omega} \mathcal Q_3\left(\left(-\nabla^2 v - (\textnormal{sym} \; \kappa_g)_{2 \times 2}\right)^{\ast} + c_1 \otimes e_3\right)\;\mathrm{d}x'.
\end{split}
\end{equation}
\end{lemma}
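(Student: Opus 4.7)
The plan is to adapt the von K\'arm\'an recovery sequence of \cite{Lewicka-Mahadevan-Pakzad_2011} by building the prescribed warping fields $c_0, c_1$ directly into the ansatz, and then to upgrade the resulting (only approximately incompressible) deformation to an exactly incompressible one via composition with a thickness-direction diffeomorphism, as in \cite{Conti-Dolzmann_2009}.

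\smallskip

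\noindent\emph{Ansatz and compactness.} I take
\[
\tilde u^h(x', x_3) = \binom{x' + h^2 w(x') - h x_3 \nabla v(x')}{x_3 + h v(x')} + h^2 x_3\, q(x') + \frac{h x_3^2}{2}\, p(x'),
\]
with $q, p \in \mathcal{C}^\infty(\bar\Omega, \mathbb{R}^3)$ to be determined from $c_0, c_1, \epsilon_g, \kappa_g$ by the strain-matching identities in the next paragraph. Verification of (i)--(iii) of Theorem \ref{IncompressibleTFvK}(a) for $y^h(x',x_3) = \tilde u^h(x', h x_3)$ is a direct term-by-term computation: the last two correctors contribute at order $h^3$ to $y^h - x'$ and hence vanish from the limits defining $V^h$ and $h^{-1} V^h_{tan}$.

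\smallskip

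\noindent\emph{Strain expansion and target energy.} Let $M(x')$ be the skew-symmetric matrix with $M e_3 = -(\nabla v(x'), 0)^T$ and set $R^h = \mathrm{Id}_3 + h M + \frac{h^2}{2} M^2$; a short computation shows $(R^h)^T R^h = \mathrm{Id}_3 + O(h^3)$, so $R^h$ is $O(h^3)$-close to $SO(3)$. Expanding in $h$ (using $(a^h)^{-1} = \mathrm{Id}_3 - h^2 \epsilon_g - h x_3 \kappa_g + O(h^3)$ and writing $\hat x_3 = x_3/h \in (-1/2, 1/2)$),
\[
\nabla \tilde u^h\, (a^h)^{-1} = R^h\bigl(\mathrm{Id}_3 + h^2 \tilde E\bigr) + O(h^3), \qquad \mathrm{sym}\,\tilde E = A_0 - \hat x_3\, B_0,
\]
where $A_0, B_0 \in \mathbb{R}^{3\times 3}_{sym}$ are the symmetrizations of the two matrices appearing inside $\mathcal{Q}_3$ on the right-hand side of (\ref{limitIhFvK}); this requirement determines $q, p$ uniquely. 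Frame-indifference and the $\mathcal{C}^2$-Taylor expansion of $W$ near $SO(3)$ give $W(\nabla\tilde u^h (a^h)^{-1}) = \frac{h^4}{2}\mathcal{Q}_3(\mathrm{sym}\,\tilde E) + o(h^4)$ uniformly on $\Omega^h$; substituting $x_3 = h \hat x_3$ in $\frac{1}{h}\int_{\Omega^h}$ and using $\int_{-1/2}^{1/2}\hat x_3\, d\hat x_3 = 0$, $\int_{-1/2}^{1/2}\hat x_3^2\, d\hat x_3 = 1/12$, together with $\mathcal{Q}_3 = \mathcal{Q}_3\circ\mathrm{sym}$, delivers exactly the right-hand side of (\ref{limitIhFvK}).

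\smallskip

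\noindent\emph{Exact incompressibility (the main obstacle).} The trace-zero hypotheses (\ref{trace0FvK}), together with the choice of $q, p$ above, give $\mathrm{Tr}\,A_0 = \mathrm{Tr}\,B_0 = 0$; hence $\mathrm{Tr}\,\tilde E \equiv 0$ and $\det\bigl(\nabla\tilde u^h (a^h)^{-1}\bigr) = 1 + O(h^3)$. To promote this approximate incompressibility to an exact one, define the scalar map $\Phi^h(x', \cdot)$ by the ODE
\[
\partial_3 \Phi^h(x', x_3) = \frac{\det a^h(x', x_3)}{(\det\nabla\tilde u^h)(x',\Phi^h(x', x_3))}, \qquad \Phi^h(x', -h/2) = -h/2,
\]
and set $u^h(x', x_3) = \tilde u^h(x', \Phi^h(x', x_3))$. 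The column-reduction identity $\det\nabla u^h = (\det\nabla\tilde u^h)(\cdot,\Phi^h)\cdot\partial_3\Phi^h$ then gives $\det\nabla u^h = \det a^h$ exactly, so $u^h$ is $W_{In}$-admissible. The bound $\partial_3\Phi^h - 1 = O(h^3)$ integrated over $x_3\in(-h/2, h/2)$ (width $h$) yields $\|\Phi^h - x_3\|_\infty \leq C h^4$ and $\|\nabla u^h - \nabla\tilde u^h\|_\infty = O(h^3)$. Combined with $|\nabla\tilde u^h - R^h| = O(h^2)$ and the near-$SO(3)$ bound $|W(F_1) - W(F_2)| \leq C\bigl(|F_1 - R| + |F_1 - F_2|\bigr)|F_1 - F_2|$, these estimates give $h^{-4}\bigl|I^h_{In}(u^h) - \frac{1}{h}\int_{\Omega^h} W(\nabla\tilde u^h (a^h)^{-1})\bigr| = O(h) \to 0$, so the energy limit computed above is preserved and $u^h$ is the desired recovery sequence. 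The core difficulty — and the reason the proof truly needs the trace-zero hypotheses rather than arbitrary smooth $c_0, c_1$ — is that the Conti-Dolzmann composition requires an $O(h^3)$, not merely $O(h^2)$, control on the determinant defect. A minor bookkeeping nuisance, namely that $\Phi^h(\cdot, h/2)$ can differ from $h/2$ by $O(h^4)$, is handled by extending the polynomial-in-$x_3$ ansatz $\tilde u^h$ smoothly to a slightly thickened slab; the energy contribution of the $O(h^4)$-thick boundary layer is negligible at scale $h^4$.
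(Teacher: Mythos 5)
Your route is the same as the paper's: your ansatz $\tilde u^h$ is the paper's $u_c^h$ with $q=l(\epsilon_g)+c_0$ and $p=\pm(l(\kappa_g)+c_1)$ (the sign is immaterial since $\mathcal Q_3$ is even and $\int_{-1/2}^{1/2}\hat x_3\,\mathrm d\hat x_3=0$), and your ODE for $\Phi^h$ is exactly (\ref{ODEFvK})--(\ref{fxyx3FvK}) written in the unscaled thickness variable: the two determinants in (\ref{fxyx3FvK}) telescope to $\det a^h(x',hx_3)/\det\nabla u_c^h(x',hy)$, which is your right-hand side. The only cosmetic differences are the initial condition (bottom face instead of mid-plane, harmless since the resulting $O(h^4)$ shift does not affect (i)--(iii)) and that you appeal to classical Picard theory rather than the explicit Banach fixed point. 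Your identification of why the trace-zero hypotheses (\ref{trace0FvK}) are needed --- they make the determinant defect $O(h^3)$, so the reparametrization is an $o(h^2)$ perturbation that does not contaminate the limiting strain --- is precisely the mechanism behind (\ref{34'FvK}).

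There is, however, one step you assert without proof, and it is exactly where the paper's argument spends most of its effort (its Steps 3--4): the in-plane derivatives of $\Phi^h$. By the chain rule, $\nabla u^h(x',x_3)=\nabla\tilde u^h\big(x',\Phi^h\big)\big(\mathrm{Id}_3+e_3\otimes(\partial_1\Phi^h,\partial_2\Phi^h,\partial_3\Phi^h-1)\big)$, compare (\ref{GraduFvK}); hence your claims that $u^h\in\mathcal C^1(\Omega^h,\mathbb R^3)$ and that $\|\nabla u^h-\nabla\tilde u^h\|_\infty=O(h^3)$ require, besides $|\partial_3\Phi^h-1|\le Ch^3$ and $|\Phi^h-x_3|\le Ch^4$, also $|\partial_\alpha\Phi^h|=O(h^3)$ for $\alpha=1,2$ --- the analogue of the third bound in (\ref{boundsFvK}), which the paper obtains through the approximate-iteration (Ostrowski) argument after proving that the fixed point is differentiable in $x'$ at all. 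Without this bound the $h^2$-order strain, and hence the limit (\ref{limitIhFvK}), is not justified. In your formulation the repair is standard and should be stated: the right-hand side of your ODE is smooth in $(x',y)$, so smooth dependence on parameters gives $\partial_\alpha\Phi^h$ solving the variational equation $\partial_3(\partial_\alpha\Phi^h)=\partial_\alpha f+\partial_y f\,\partial_\alpha\Phi^h$ with zero data at $x_3=-h/2$, and Gronwall over an interval of length $h$, using $|\partial_\alpha f|=O(h^2)$ there, yields $|\partial_\alpha\Phi^h|\le Ch^3$. (Also, your closing ``boundary layer'' concern is vacuous: $\tilde u^h$ is polynomial in $x_3$, so it and all the expansions you use are already defined on a slightly thicker slab, and the energy is in any case an integral over the fixed domain $\Omega^h$; no extension or extra energy estimate is needed.)
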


\begin{proof}
{\bf 1.} Following \cite{Lewicka-Mahadevan-Pakzad_2011}, we define deformations $u^h_c \in \mathcal C^1(\Omega^h, \mathbb R^3)$:
\begin{equation*}%\label{uchFvK}
\begin{split}
u_c^h(x', x_3) = & \begin{bmatrix}x'\\ x_3\end{bmatrix} + \begin{bmatrix}h^2 w(x') \\hv(x')\end{bmatrix} + x_3 \begin{bmatrix}-h\nabla v(x')\\ 0 \end{bmatrix} \\ & + h^2 x_3\left(l(\epsilon_g) + c_0\right) + \frac{1}{2}hx_3^2\left(l(\kappa_g) + c_1\right).
\end{split}
\end{equation*}
Observe that:
\begin{equation}\label{NuchFvK}
\begin{split}
\nabla u_c^h(x', x_3) = & ~\mbox{Id}_3 + h^2 (\nabla w)^{\ast} + h \begin{bmatrix}0 &-\nabla v\\ (\nabla v)^T &0 \end{bmatrix} - h(\nabla^2 v)^{\ast} \\
& + h^2\begin{bmatrix}x_3 \nabla(l(\epsilon_g)+c_0) &l(\epsilon_g) + c_0 \end{bmatrix} \\ & + h^2 \begin{bmatrix}\frac{1}{2}x_3^2 \nabla(l(\kappa_g) + c_1) &x_3(l(\kappa_g )+c_1)\end{bmatrix}.
\end{split}
\end{equation}
We write the desired sequence of scaled deformation on $\Omega^1$ as:
\[
y^h(x', x_3) = y_c^h\big(x', \phi^h(x', x_3)\big),
\]
where function $\phi^h: \Omega^1 \to \mathbb{R}$ is to be determined.
Notice that relations between the original deformations and their scaled versions are given by:
\[
 y^h(x', x_3) = u^h(x', h x_3) \quad \mbox{and} \quad y_c^h(x', \phi^h(x', x_3)) = u_c^h(x', h\phi^h (x', x_3)),
\]
and consequently:
\[
u^h(x', x_3) = u_c^h\big(x', h\phi^h (x', \frac{x_3}{h})\big).
\]
We also have:
\begin{equation*}%\label{RDefGradFvK}
\begin{split}
\nabla u^h(x', x_3) = & \; \nabla u_c^h\big(x', h\phi^h(x', \frac{x_3}{h})\big)\times  \\ & \times  \begin{bmatrix}1 &0 &0\\ 0 &1 &0\\ h\partial_1\phi^h\left(x', \frac{x_3}{h}\right) & h\partial_2\phi^h\left(x', \frac{x_3}{h}\right) &\partial_3\phi^h\left(x', \frac{x_3}{h}\right)\end{bmatrix},
\end{split}
\end{equation*}
which leads to:
\begin{equation*}
\begin{split}
& \det \big(\nabla u^h(x', x_3)\big(a^h(x', x_3)\big)^{-1}\big)\\
&= \partial_3 \phi^h\big(x', \frac{x_3}{h}\big)\cdot \det\Big((R^h)^T\nabla u_c^h\big(x', h\phi^h(x', \frac{x_3}{h})\big)\big(a^h\big(x', h\phi^h\big(x', \frac{x_3}{h}\big)\big)\big)^{-1}\Big)\\
&\hspace{2.6cm}\cdot\det\Big(a^h\big(x', h\phi^h\big(x', \frac{x_3}{h}\big)\big)\big(a^h(x', x_3)\big)^{-1}\Big),\\
\end{split}
\end{equation*}
where $R^h$ is an auxiliary $SO(3)$-valued matrix field in:
\[
R^h = e^{hA} = \mbox{Id}_3 + hA + \frac{h^2}{2}A^2 + \mathcal O(h^3) \quad \mbox{ with }~
A = \begin{bmatrix}0 &-\nabla v\\ (\nabla v)^T &0 \end{bmatrix}.
\]
Since the incompressibility of $u^h$ necessitates that $\det (\nabla u^h (a^h)^{-1}) =1$, performing a change of variable and requiring $\phi^h$ to keep the the mid-surface invariant, $\phi^h$ needs to satisfy the following ordinary differential equation:
\begin{equation}\label{ODEFvK}
\left\{
\begin{array}{ll}
\partial_3 \phi^h(x', x_3) = f\left(x', \phi^h(x', x_3), x_3\right)\\
\phi^h(x', 0) = 0,
\end{array}
\right.
\end{equation}
where $f: \Omega \times (-1, 1) \times (-1/2, 1/2) \to \mathbb R$ is given by:
\begin{equation}\label{fxyx3FvK}
\begin{split}
f(x, y, x_3)^{-1} = &\;\det\left((R^h)^T \nabla u_c^h(x', hy)(a^h(x', hy))^{-1}\right)\\ & \cdot \det\Big(a^h(x', hy)\big(a^h(x', hx_3)\big)^{-1}\Big).
\end{split}
\end{equation}

{\bf 2.} In the step below, we will show that for all $h$ is sufficiently small, there exists a unique solution $\phi^h(x', \cdot) \in \mathcal C^1(-1/2, 1/2)$ to (\ref{ODEFvK}) and for each $x' \in \Omega$, such that that the following bounds are satisfied:
\begin{equation}\label{boundsFvK}
\begin{split}
& |\phi^h(x', x_3) - x_3| \leq Ch^3, \quad |\partial_3 \phi^h(x', x_3) - 1| \leq Ch^3, \\ & |\nabla_{x'}\phi^h(x', x_3)|\leq Ch^3. 
\end{split}
\end{equation}
We note that $f$, $\nabla_{x'}f$ and $\nabla_y f$ are Lipschitz in $y$, and that:
\[
f(x', y, x_3)^{-1} = 1 + h^2(y - x_3)\;\mbox{Tr}(\kappa_g) + \mathcal O(h^3)
\]
leading to:
\begin{equation}\label{34'FvK}
f(x', y, x_3) = 1 - h^2(y - x_3)\; \mbox{Tr}(\kappa_g) + \mathcal O(h^3).
\end{equation}
Thus, for all sufficiently small $h$:
\begin{equation}\label{35'FvK}
\frac{1}{2} < f(x', y, x_3) < 2 \quad \mbox{ for all } x' \in \Omega, \; y \in (-1, 1), \; x_3 \in \big(-\frac{1}{2}, \frac{1}{2}\big).
\end{equation}
Further, the derivatives $\nabla_{x'} f$ and $\nabla_y f$ are:
\[
\begin{split}
& \nabla_{x'}f(x', y, x_3)e_{\alpha} = - h^2(y - x_3)\partial_{\alpha}\mbox{Tr}(\kappa_g) + \mathcal O(h^3), \\ &  \nabla_y f(x', y, x_3) = -h^2 \mbox{Tr}(\kappa_g).
\end{split}
\] 
Hence, for $h$ sufficiently small we get:
\begin{equation}\label{38'FvK}
\|\nabla_{x'}f\|_{\infty}+\|\nabla_y f\|_{\infty} \leq 1.
\end{equation}
Consider the Banach space $B = \mathcal C^0(\Omega^1, (-1, 1))$ equipped with $L^{\infty}$ norm, and define the operator $T: B \to B$ as:
\begin{equation}\label{TuFvK}
(Tu)(x', x_3) = \int_0^{x_3}f(x', u(x', t), t)\;\mbox{d}t \quad \mbox{ for all } u \in B,
\end{equation}
which is well posed in virtue of (\ref{35'FvK}) since then $-1 < (Tu)(x', x_3) < 1$. From (\ref{38'FvK}), for each $u, v \in B$ we get:
\[
\begin{split}
&|(Tu - Tv)(x', x_3)| = \left|\int_0^{x_3} f(x', u(x', t), t)\;\mbox{d}t - \int_0^{x_3}f(x', v(x', t), t) \;\mbox{d}t \right|\\
& \leq \int_0^{|x_3|}\big|f(x', u(x', t), t)- f(x', v(x', t), t)\big| \;\mbox{d}t \leq \frac{1}{2}\|u - v\|_{\infty}.
\end{split}
\]
Invoking the Banach fixed point theorem, there exists a unique $\phi^h \in B$ such that $T\phi^h = \phi^h$, which is equivalent to the existence of a solution to (\ref{ODEFvK}).

{\bf 3.} Next we show that $\phi^h$ is $\mathcal C^1$ regular in the tangential direction. For a fixed $\alpha \in \{1, 2\}$, define:
\[
S(u, v)(x', x_3) = \int_0^{x_3} \nabla_{x'}f(x', u(x', t), t)e_{\alpha} + \nabla_y f(x', u(x', t), t)v(x', t)\; \mbox{d}t.
\]
Let $u_0(x', x_3) = 0$ and $v_0(x', x_3) = \partial_{\alpha}u_0(x', x_3) = 0$, and define sequences $\{u_k, v_k\}_{k=1}^\infty$ iteratively as follows:
\[
u_{k + 1} = Tu_{k}, \quad v_{k + 1} = S(u_k, v_k),
\]
so that in particular $\lim_{k \to \infty} u_k = \phi^h$. We will now show that:
\begin{equation}\label{newFvK}
\lim_{k \to \infty} v_k = \partial_{\alpha} \phi^h.
\end{equation}
Note first, that:
\begin{equation}\label{partialTFvK}
\begin{split}
& \partial_{\alpha} (Tu)(x', x_3) \\ & \; = \int_0^{x_3} \nabla_{x'} f(x', u(x', t), t)e_{\alpha}+ \nabla_y f(x', u(x', t), t)\partial_{\alpha} u(x', t) \; \mbox{d}t \\ & 
\; = S(u, \partial_{\alpha} u).
\end{split}
\end{equation}
In fact, an inductive argument implies that for each $k \geq 1$, thee holds $v_k(x', x_3) = \partial_{\alpha}u_k(x', x_3)$, because:
\begin{itemize}
\item[(i)] For $k = 0$, this identity holds by the definition of $v_0$.
\item[(ii)] If  $v_k = \partial_{\alpha} u_k$ then, by
(\ref{partialTFvK}):
$v_{k+1} = S(u_k, v_k) = S(u_k, \partial_{\alpha}
u_k) = \partial_{\alpha} (T u_k) = \partial_{\alpha} u_{k+1}.$
\end{itemize}

\smallskip

\noindent Now, for each $v \in \mathcal C(\Omega^1)$, define:
\begin{equation*}
\begin{split}
(Rv)(x', x_3) & \; = S(\phi^h, v)(x', x_3) \\ & = \int_0^{x_3} \nabla_{x'} f(x', \phi^h(x',
t), t)e_{\alpha} + \nabla_y f(x', \phi^h(x', t), t) v(x', t) ~\mbox{d}t.
\end{split}
\end{equation*}
For any two $v_1, v_2$, we have:
\begin{equation*}
\begin{split}
| (R v_1 - R v_2)(x', x_3)| &\; = \left|\int_0^{x_3} \nabla_y f(x, \phi^h(x', t),
t)(v_1(x', x_3) - v_2(x', x_3))\;\mbox{d}t\right| \\ & \leq \frac{1}{2}\|v_1 -
v_2\|_{\infty}. 
\end{split}
\end{equation*}
Thus the mapping $R$ is a contraction, and hence it has a fixed point $z$. Further:
\[
\begin{split}
v_{k + 1} - Rv_k = & \; S(u_k, v_k) - S(\phi^h, v_k)\\
= & \int_0^{x_3} \nabla_x f(x', u_k(x', t), t)e_{\alpha} + \nabla_y f(x', u_k(x',
t), t) v_k(x', t)\;\mbox{d}t\\
& - \int_0^{x_3} \nabla_{x'} f(x', \phi^h(x', t), t)e_{\alpha} +
\nabla_y f(x', \phi^h(x', t), t)v_k(x', t)\;\mbox{d}x'\\
\leq &\; L_1\|u_k - \phi^h\|_{\infty} + L_2 \|u_k -
\phi^h\|_{\infty}\|v_k\|_{\infty}.
\end{split}
\]
where $L_1, L_2$ are the Lipschitz constants of $\nabla_{x'} f(x',
y, x_3)$ and $\nabla_y f(x', y, x_3)$ with respect to $y$. Since $\|u_k -
\phi^h\|_{\infty} \to 0$ as $k \to \infty$, we may use Ostrowski
theorem on approximate iteration \cite{walter_2013}, and conclude that $v_k
\to z$ uniformly in $\Omega \times (-1/2, 1/2)$.
Therefore, there must hold: $z = \partial_{\alpha} \phi^h(x', x_3)$ and:
\begin{equation*}%\label{new}
\lim_{k \to \infty} v_k = \partial_{\tau} \phi^h,
\end{equation*}
as claimed.

\smallskip

\textbf{4.} We start this step by proving:
\begin{equation}\label{bdphihtanFvK}
|\nabla_{x'} \phi^h (x', x_3)| < C, \qquad \mbox{ for all } x' \in \Omega, \;  x_3 \in \big(-\frac{1}{2}, \frac{1}{2}\big).
\end{equation}
In view of (\ref{newFvK}), the boundedness of the sequence $\{v_k\}$ and (\ref{38'FvK}) we get:
\[
\begin{split}
| v_1(x', x_3)| &= \left|\int_0^{x_3} \nabla_{x'} f(x', u_0(x', t), t) e_\alpha + \nabla_y f(x', u_0(x', t), t) v_0(x', t)\;\mbox{d}t\right| \\ & \leq
\frac{1}{2},\\
    | v_2(x', x_3)| & = \left|\int_0^{x_3} \nabla_{x'} f(x', u(x', t), t) e_\alpha +
\nabla_y f(x', u(x', t), t) v_1(x', t) \; \mbox{d}t\right|\\
& \leq \int_0^{|x_3|} 1 +  \frac{1}{2}\; \mbox{d}s \leq
\frac{1}{2} + \frac{1}{2^2}.
\end{split}
\]
Inductively, it follows that:
$$ \|v_k\|_{\infty} \leq \sum_{i = 1}^{k} \frac{1}{2^i} = 1, $$
implying (\ref{bdphihtanFvK}).
By (\ref{ODEFvK}) and (\ref{34'FvK}), we now see that:
\begin{equation}\label{41'FvK}
\partial_3 \phi^h(x', x_3) - 1 = - h^2(\phi^h - x_3) \mbox{Tr}(\kappa_g) + \mathcal O(h^3),
\end{equation}
which yields:
\begin{equation*}%\label{42'FvK}
\begin{split}
& |\partial_3 \phi^h(x', x_3) - 1| \leq C h^2,
\\ &
|\phi^h(x', x_3) - x_3| = \Big|\int_0^{x_3}(\partial_3 \phi^h(x', t)- 1)\; \mbox{d}t\Big| \leq  C h^2.
\end{split}
\end{equation*}
Substituting the second bound above into the first one, we obtain:
\begin{equation}\label{44'FvK}
|\partial_3 \phi^h - 1| \leq Ch^3,
\end{equation}
which is the second inequality in (\ref{boundsFvK}). Using (\ref{44'FvK}) gives the first inequality in (\ref{boundsFvK}). To show the estimate for tangential derivatives, we study:
\begin{equation}\label{46'FvK}
\begin{split}
&\partial_{\alpha}\phi^h(x', x_3) = \int_0^{x_3} \partial_{\alpha} f(x', \phi^h, t)\;\mbox{d}t \\ & = \int_0^{x_3} \partial_{\alpha}\big(1 - h^2(\phi^h - t)\mbox{Tr}(\kappa_g) \big) + \mathcal O(h^3)\; \mbox{d}t\\
& = \int_0^{x_3}-h^2(\partial_{\alpha} \phi^h)\mbox{Tr}(\kappa_g) - h^2(\phi^h - t)\partial_{\alpha}(\mbox{Tr}(\kappa_g)) + \mathcal O(h^3)\; \mbox{d}t.
\end{split}
\end{equation} 
Based on (\ref{bdphihtanFvK}) and the first two inequalities in (\ref{boundsFvK}), we get:
\begin{equation}\label{45'FvK}
|\partial_{\alpha} \phi^h| \leq Ch^2.
\end{equation}
Applying (\ref{45'FvK}) in (\ref{46'FvK}), still with the first two inequalities in (\ref{boundsFvK}), we conclude the last inequality in (\ref{boundsFvK}).

\smallskip

{\bf 5.} To prove that the scaled deformation family $\{y^h\}$ realizes the limiting energy $\mathcal I_g^{In}$, we recall that:
\begin{equation}\label{GraduFvK}
\nabla u^h(x', hx_3) = \nabla u_c^h(x', h\phi^h)\begin{bmatrix}1 &0 &0\\ 0 &1 &0\\ h \partial_1\phi^h & h\partial_2\phi^h & \partial_3 \phi^h\end{bmatrix},
\end{equation}
and obtain:
\begin{equation*}%\label{RuchahFvK}
\begin{split}
& (R^h)^T \nabla u_c^h(x', h\phi^h)(a^h(x', h\phi^h))^{-1} 
\\ & \qquad = \mbox{Id}_3 + h^2 \big((\nabla w)^{\ast} + (l(\epsilon_g) + c_0) \otimes e_3 - \epsilon_g - \frac{1}{2}A^2 \big)\\
& \qquad \quad + h^2 \phi^h \big((l(\kappa_g) + c_1)\otimes e_3 - \kappa_g - (\nabla^2 v)^{\ast}\big) + \mathcal O(h^3), \\ & 
a^h(x', h\phi^h)(a^h(x', hx_3))^{-1} = \mbox{Id}_3 + h^2(\phi^h - x_3)\kappa_g + \mathcal O(h^4).
\end{split}
\end{equation*}
Owing to the bounds in (\ref{boundsFvK}) and the above identities, we infer that:
\begin{equation*}%\label{RltvGradFvK}
\begin{split}
&(R^h)^T \nabla u^h(x', hx_3)\big(a^h(x', hx_3)\big)^{-1}\\
&=(R^h)^T \nabla u_c^h(x', h \phi^h)\big(a^h(x', h\phi^h)\big)^{-1}a^h(x', h\phi^h)\big(a^h(x', hx_3)\big)^{-1}\\
&\quad + (R^h)^T \nabla u_c^h(x', h\phi^h)e_3 \otimes \begin{bmatrix}h\partial_1\phi^h &h \partial_2 \phi^h &\partial_3 \phi^h - 1\end{bmatrix}^Ta^h(x', hx_3)^{-1}\\
&= \mbox{Id}_3 + h^2 \Big((\nabla w)^{\ast} - \epsilon_g - \frac{1}{2}A^2 + (l(\epsilon_g)+ c_0)\otimes e_3\Big)\\
& \quad + h^2 x_3\left(-\kappa_g-(\nabla^2 v)^{\ast} + \left(l(\kappa_g)+c_1\right)\otimes e_3 \right) + \mathcal O(h^3).
\end{split}
\end{equation*}
Recall the quadratic form $\mathcal Q_3(\tilde F) = \nabla^2 W (\mathrm{Id_3})(\tilde F, \tilde F) = \mathcal Q_3(\mathrm{sym}\, \tilde F)$. Taylor expanding $W$ around $\mathrm{Id_3}$ and utilizing the uniform boundedness of all the terms involved, we get:
\begin{equation*}%\label{limEFvK}
\begin{split}
\lim_{h \to 0}\frac{1}{h^4}& I^h_{In}(u^h) \\ & = ~\frac{1}{2}\int_{\Omega}\mathcal Q_3\Big( \mathrm{sym}\,\big((\nabla w)^{\ast} - \epsilon_g - \frac{1}{2}A^2 + (l(\epsilon_g) + c_0)\otimes e_3\big)\Big)\;\mbox{d}x'\\
& \quad + \frac{1}{24} \int_{\Omega}\mathcal Q_3\big(\mathrm{sym}\left(-\kappa_g - (\nabla^2 v)^{\ast} + (l(\kappa_g) + c_1)\otimes e_3\right)\big)\;\mbox{d}x'.
\end{split}
\end{equation*} 
Notice that: 
\[
A^2 = - (\nabla v \otimes \nabla v)^{\ast} - |\nabla v|^2 e_3 \otimes _3. 
\]
Thus:
\begin{equation*}
\begin{split}
& \mathrm{sym}\,\Big((\nabla w)^{\ast} - \epsilon_g - \frac{1}{2}A^2 + (l(\epsilon_g) + c_0)\otimes e_3\Big) \\ & \quad = \Big(\mathrm{sym}\, \nabla w + \frac{1}{2} \nabla v \otimes \nabla v - (\mathrm{sym}\, \epsilon_g)_{2 \times 2}\Big)^{\ast} 
+ \mathrm{sym}\, \big(\frac{1}{2} |\nabla v|^2 e_3 + c_0\big) \otimes e_3,  \\
& \mathrm{sym}\left(-\kappa_g - (\nabla^2 v)^{\ast} + (l(\kappa_g) + c_1)\otimes e_3\right) \\ & \quad = \left(-(\mathrm{sym}\, \kappa_g)_{2 \times 2} - \nabla^2 v\right)^{\ast} + \mathrm{sym}(c_1 \otimes e_3).
\end{split} 
\end{equation*}
As $\mathcal Q_3$ depends only on the symmetric part of its argument, the above implies the lemma.
\end{proof}

\medskip

Using the same technique as in \cite{Conti-Dolzmann_2009, Li-Chermisi_2013}, we obtain:
\begin{lemma}\label{smoothvwFvK}
For each $v \in C^{\infty}(\bar \Omega, \mathbb R)$ and $w \in C^{\infty}(\bar \Omega; \mathbb R^2)$, there exists a family $\{u^h \in \mathcal C^1(\Omega^h, \mathbb R^3)\}_{h\to 0}$ satisfying Theorem \ref{IncompressibleTFvK} part (b).
\end{lemma}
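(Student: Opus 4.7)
The plan is to reduce the lemma to Lemma~\ref{trace0lemmaFvK} by choosing smooth auxiliary fields $c_0, c_1 \in \mathcal{C}^{\infty}(\Omega, \mathbb{R}^3)$ that (a) satisfy both trace constraints in (\ref{trace0FvK}) and (b) make the right-hand side of (\ref{limitIhFvK}) collapse to $\mathcal{I}_g^{In}(w, v)$ from (\ref{IncomLmEFvK}).

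Abbreviating $F_0 = \mathrm{sym}\,\nabla w + \frac{1}{2}\nabla v \otimes \nabla v - (\mathrm{sym}\,\epsilon_g)_{2\times 2}$ and $F_1 = \nabla^2 v + (\mathrm{sym}\,\kappa_g)_{2\times 2}$, a direct computation of the traces in (\ref{trace0FvK}) shows that the two constraints reduce to the algebraic prescriptions $(c_0)_3 = -\mathrm{Tr}\,F_0 - \frac{1}{2}|\nabla v|^2$ and $(c_1)_3 = \mathrm{Tr}\,F_1$ for the normal components, both clearly smooth in $x'$. For the tangential pieces $(c_0)_{tan}, (c_1)_{tan}$, I would appeal directly to the definition of $\mathcal{Q}_2^{In}$ in (\ref{IncomQ2FvK}): the non-degeneracy condition (iii) in (\ref{w-ass}) together with frame indifference renders $\mathcal{Q}_3$ positive definite on the symmetric subspace of $\mathbb{R}^{3\times 3}$, so the constrained quadratic minimization defining $\mathcal{Q}_2^{In}(F_0)$ admits a unique symmetric minimizer $\tilde F_0^{\ast}$ whose $(1,3)$ and $(2,3)$ entries depend linearly and smoothly on $F_0$. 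Setting $(c_0)_1, (c_0)_2$ equal to twice those entries arranges $\mathrm{sym}\bigl((F_0)^{\ast} + (\frac{1}{2}|\nabla v|^2 e_3 + c_0)\otimes e_3\bigr) = \tilde F_0^{\ast}$, and an analogous construction with $-F_1$ in place of $F_0$ specifies $(c_1)_1, (c_1)_2$; here I will use the homogeneity $\mathcal{Q}_2^{In}(-F_1) = \mathcal{Q}_2^{In}(F_1)$.

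With $c_0, c_1$ so defined, the two $\mathcal{Q}_3$-integrands on the right-hand side of (\ref{limitIhFvK}) collapse pointwise to $\mathcal{Q}_2^{In}(F_0)$ and $\mathcal{Q}_2^{In}(F_1)$ respectively, because $\mathcal{Q}_3$ depends only on the symmetric part of its argument. Invoking Lemma~\ref{trace0lemmaFvK} with these choices then produces an incompressible family $u^h \in \mathcal{C}^1(\Omega^h, \mathbb{R}^3)$ realizing properties (i)--(iii) in Theorem~\ref{IncompressibleTFvK}(a) with $y^h(x', x_3) = u^h(x', h x_3)$, together with the energy limit $\lim_{h \to 0} h^{-4} I^h_{In}(u^h) = \frac{1}{2}\int_\Omega \mathcal{Q}_2^{In}(F_0) + \frac{1}{24}\int_\Omega \mathcal{Q}_2^{In}(F_1) = \mathcal{I}_g^{In}(w, v)$, which is precisely Theorem~\ref{IncompressibleTFvK}(b) for smooth $v$ and $w$. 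I do not foresee a serious obstacle: the only non-routine point is the smoothness of $(c_0)_{tan}$ and $(c_1)_{tan}$, but this is immediate because the coefficient matrix of the Lagrange-multiplier system determining the constrained minimizer is a constant matrix built from $\nabla^2 W(\mathrm{Id}_3)$, so the minimizer is a linear function of the data with constant coefficients and inherits the regularity of $v$ and $w$.
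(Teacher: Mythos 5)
Your proposal is correct and follows essentially the same route as the paper: both reduce the statement to Lemma \ref{trace0lemmaFvK} by choosing $c_0, c_1$ so that the trace constraints (\ref{trace0FvK}) hold and the two $\mathcal Q_3$-integrands in (\ref{limitIhFvK}) coincide pointwise with $\mathcal Q_2^{In}$ evaluated at the stretching and bending strains (using $\mathcal Q_2^{In}(-F)=\mathcal Q_2^{In}(F)$ for the bending term). The only difference is that you get smoothness of the tangential components of $c_0, c_1$ directly, since the constrained minimizer in (\ref{IncomQ2FvK}) solves a constant-coefficient linear system built from $\nabla^2 W(\mathrm{Id}_3)$ and is thus a linear, hence smooth, function of the data, whereas the paper only records smoothness of the normal components and instead approximates the tangential parts by smooth fields, applying Lemma \ref{trace0lemmaFvK} along the approximating sequence and concluding with a diagonal argument; your shortcut is legitimate and slightly streamlines that step.
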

\begin{proof}
By the positive defniteness of $\mathcal Q_3$, there exist vector fields $c_0, c_1 \in L^2(\Omega, \mathbb{R}^3)$ such that:
\begin{equation} \label{Q2InQ3Trace0}
\begin{split}
&\mathcal Q_2^{In}\Big(\mathrm{sym}\,\nabla w + \frac{1}{2}\nabla v \otimes \nabla v - (\mathrm{sym}\,\epsilon_g)_{2 \times 2}\Big) \\
& \qquad = \mathcal Q_3\Big(\big(\mathrm{sym}\,\nabla w + \frac{1}{2}\nabla v \otimes \nabla v - (\mathrm{sym}\,\epsilon_g)_{2 \times 2}\big)^{\ast} + c_0 \otimes e_3\Big),\\
& ~\qquad \mathrm{with} ~~ \mathrm{Tr}\Big(\big(\mathrm{sym}\,\nabla w + \frac{1}{2}\nabla v \otimes \nabla v - (\mathrm{sym}\,\epsilon_g)_{2 \times 2}\big)^{\ast} + c_0 \otimes e_3\Big) = 0;\\
& \mathcal Q_2^{In}\left(\nabla^2 v + (\mathrm{sym}\, \kappa_g)_{2 \times 2}\right) = \mathcal{Q}_3\left((\nabla^2 v + (\mathrm{sym}\, \kappa_g)_{2 \times 2})^{\ast} + c_1 \otimes e_3\right)\\
& ~\qquad \mathrm{with} ~~ \mathrm{Tr}\left((\nabla^2 v + (\mathrm{sym}\, \kappa_g)_{2 \times 2})^{\ast} + c_1 \otimes e_3\right) = 0.
\end{split}
\end{equation}
Comparing the trace-zero condition in (\ref{trace0FvK}) and (\ref{Q2InQ3Trace0}, we let $\bar c_0, \bar c_1 \in L^2(\Omega, \mathbb R^3)$ be such that:
\[
c_0 = \frac{1}{2} |\nabla v|^2 e_3 + \bar c_0, \quad c_1 = - \bar c_1.
\]
In view of (\ref{trace0FvK}) and (\ref{limitIhFvK}), we rewrite (\ref{Q2InQ3Trace0}) as:
\begin{equation*}%\label{Q2InQ3Trace0-2}
\begin{split}
& \mathcal Q_2^{In}\Big(\mathrm{sym}\,\nabla w + \frac{1}{2}\nabla v \otimes \nabla v - (\mathrm{sym}\,\epsilon_g)_{2 \times 2}\Big) \\
& \quad = \mathcal Q_3\Big(\big(\mathrm{sym}\,\nabla w + \frac{1}{2}\nabla v \otimes \nabla v - (\mathrm{sym}\,\epsilon_g)_{2 \times 2}\big)^{\ast} + \big(\frac{1}{2} |\nabla v|^2 e_3 + \bar c_0\big) \otimes e_3\Big)\\
& \quad \mathrm{with} ~~ \mathrm{Tr}\Big(\big(\mathrm{sym}\,\nabla w + \frac{1}{2}\nabla v \otimes \nabla v - (\mathrm{sym}\,\epsilon_g)_{2 \times 2}\big)^{\ast} \\ 
& \qquad\qquad\qquad \qquad \qquad \qquad  + \big(\frac{1}{2} |\nabla v|^2 e_3 + \bar c_0\big) \otimes e_3\Big) = 0,\\
&\mathcal Q_2^{In}\left(\nabla^2 v + (\mathrm{sym}\, \kappa_g)_{2 \times 2}\right) = \mathcal{Q}_3\left(-(\nabla^2 v + (\mathrm{sym}\, \kappa_g)_{2 \times 2})^{\ast} + \bar c_1 \otimes e_3\right)\\
&\quad \mathrm{with} ~~ \mathrm{Tr}\left(-(\nabla^2 v + (\mathrm{sym}\, \kappa_g)_{2 \times 2})^{\ast} + \bar c_1 \otimes e_3\right) = 0.
\end{split}
\end{equation*}
From the above and recalling the regularity of $v$, $w$, $\epsilon_g$ and $\kappa_g$, we obtain that $\bar c_i \cdot e_3 \in \mathcal C^{\infty}(\bar \Omega)$ for $i = 0, 1$. Let ${c_i^n}_{n=1}^\infty$ be sequences of in $\mathcal C^{\infty}(\bar \Omega)$ converging to $\bar c_i$ in $L^2(\Omega, \mathbb R^3)$, with $c_i^n \cdot e_3 = \bar c_i \cdot e_3$ for $i= 0, 1.$ Thus, $c_0^n, c_1^n$ satisfies the trace zero condition in the above displayed identities, implying that:
\begin{equation*}
\begin{split}
&\lim_{n \to \infty}\int_{\Omega} \mathcal Q_3\Big(\big(\mathrm{sym}\, \nabla w + \frac{1}{2} \nabla v \otimes \nabla v - (\mathrm{sym}\, \epsilon_g)_{2 \times 2}\big)^{\ast} \\ 
& \hspace{6cm} + \big(\frac{1}{2} |\nabla v|^2 e_3 + c_0^n\big) \otimes e_3\Big)~\mbox{d}x' \\
&= \int_{\Omega} \mathcal Q_3\Big(\big(\mathrm{sym}\, \nabla w + \frac{1}{2} (\nabla v )^{\otimes 2} - (\mathrm{sym}\, \epsilon_g)_{2 \times 2}\big)^{\ast} + \big(\frac{1}{2} |\nabla v|^2 e_3 + \bar c_0\big) \otimes e_3\Big)\;\mbox{d}x',\\
& \lim_{n \to \infty} \int_{\Omega} \mathcal Q_3\left(-\left(\nabla^2 v + (\textnormal{sym} \; \kappa_g)_{2 \times 2}\right)^{\ast} + c_1^n \otimes e_3\right)\;\mbox{d}x' \\
& = \int_{\Omega} \mathcal Q_3\Big(-\left(\nabla^2 v + (\textnormal{sym} \; \kappa_g)_{2 \times 2}\right)^{\ast} + \bar c_1 \otimes e_3\Big)\;\mbox{d}x'.
\end{split}
\end{equation*}
The required recovery sequence $u^h$ is now actualized through applying Lemma \ref{trace0lemmaFvK} to $v$, $w$, $c_0^n$, $c_1^n$, and taking a diagonal sequence.
\end{proof}

\smallskip

Finally, based on the density of $C^{\infty}(\bar \Omega, \mathbb R)$ and $C^{\infty}(\bar \Omega, \mathbb R^2)$ in $W^{2,2}(\Omega, \mathbb R)$ and $W^{1, 2}(\Omega, \mathbb R^2)$ respectively, Lemma \ref{smoothvwFvK} implies  Theorem \ref{IncompressibleTFvK} part (b) through a diagonal argument. 

\bigskip

\section{The Euler-Lagrange Equations for prestrained isotropic thin plates}
In this section, we write the Euler-Lagrange equations of the limiting energy $\mathcal I_g^{In}$ for isotropic plates. Recall that the effective energy density $W$ of such obeys the additional property:
\begin{equation*}%\label{isotropic}
\forall F \in \mathbb R^{3 \times 3}, \; R \in SO(3) \quad W(FR) = W(F).
\end{equation*}
The quadratic form $\mathcal Q_3$ thus takes the form (see e.g. \cite{Friesecke-James-Muller_2006}):
\begin{equation}
\mathcal Q_3(F) = 2 \mu |\mathrm{sym} F|^2 + \lambda |\mathrm{Tr} F|^2,
\end{equation}
where $\mu$ and $\lambda$ are the Lam\'e constants, and further:
\begin{equation*}%\label{IsoIncomQ2FvK}
\mathcal Q_2^{In} (F) = 2 \mu\left(|\mbox{sym} \, F|^2 + |\mbox{Tr}\, F|^2\right).
\end{equation*}
Through the same calculation as in \cite{Lewicka-Mahadevan-Pakzad_2011}, we obtain the Euler-Lagrange equations of $\mathcal I_g^{In}$ in (\ref{IncomLmEFvK}) as follows:
\begin{equation}\label{ELIncomFvK}
\begin{array}{ll}
{\displaystyle \Delta^2 \tilde \Phi = -\frac{3 \mu}{2}[v, v] - 3\mu\, \mbox{curl}^T\mbox{curl}\,(\epsilon_g)_{2 \times 2}}\vspace{1mm}\\
{\displaystyle \frac{\mu}{3} \Delta^2 v = [v, \tilde \Phi] - \frac{\mu}{3} \mbox{div}^T\mbox{div}\Big((\mbox{sym} \; \kappa_g)_{2 \times 2} + \frac{1}{2}\mbox{cof}(\mbox{sym} \; \kappa_g)_{2 \times 2}\Big)}, 
\end{array}
\end{equation}
where we use the formula:
\[
\begin{split}
\mbox{cof} \; \nabla^2 \tilde \Phi = 2 \mu\Big(& \mbox{sym} \nabla w + \frac{1}{2}\nabla v\otimes \nabla v \\ & -(\mbox{sym} \nabla \epsilon_g)_{2 \times 2} \big(\mbox{div}\, w + \mbox{Tr}\big(\frac{1}{2}\nabla v \otimes \nabla v - (\mbox{sym} \;\epsilon_g)_{2 \times 2}\big)\big)\mbox{Id}_3\Big),
\end{split}
\]
to recover the in-plane displacement $w$ from the Airy stress potential $\tilde \Phi \in W^{2, 2}(\Omega, \mathbb R)$. Denote:
\[
\hat \Psi = \nabla^2 v + (\mbox{sym}\; \kappa_g)_{2 \times 2}.
\]
The natural boundary conditions on $\partial\Omega$, associated with (\ref{ELIncomFvK}) are:
\begin{equation*}%\label{NCIncomFvK}
\begin{array}{ll}
\tilde \Phi = \partial_{\vec n} \tilde \Phi = 0,\\
\langle \hat \Psi : \vec n \otimes \vec n \rangle + \frac{1}{2} \langle \hat \Psi:\tau \otimes \tau\rangle  = 0, \\
(1- \nu)\partial_{\tau}\langle\hat \Psi: \vec n \otimes \tau\rangle + \mbox{div}\big(\hat \Psi + \frac{1}{2} \mbox{cof} \,\hat \Psi\big)\vec n = 0,
\end{array}
\end{equation*}
where $\vec n \in \mathbb R^2$ denotes the unit normal and $\tau$ the unit tangent vector to $\partial \Omega$. 
We remark that the above system of equations and their boundary conditions coincide with the limit of  (1.21) in \cite{Lewicka-Mahadevan-Pakzad_2011}, as $\lambda \to +\infty$, $\nu \to 1/2$, consistent with a characterization of incompressibility.

%%%%%%%%%%%%%%%%%%%%%%%%%%%%%%%%%%%%%%%%%%%%%%%%%%%%%%%%%%%%%%%%%%%%%%%%%%%%%%%%
% Bibliography, uncomment for final version
%%%%%%%%%%%%%%%%%%%%%%%%%%%%%%%%%%%%%%%%%%%%%%%%%%%%%%%%%%%%%%%%%%%%%%%%%%%%%%%%

\bibliographystyle{siam} %amsplain, elsarticle-harv, elsarticle-num
\bibliography{reference}

%\appendix
%\include{ExistphihFvK}

\end{document}